\theoremstyle{plain}
\newtheorem{theorem}{Theorem}
\newtheorem{lemma}[theorem]{Lemma}
\newtheorem{proposition}[theorem]{Proposition}
\newtheorem{problem}[theorem]{Problem}
\newtheorem{definition}[theorem]{Definition}
\theoremstyle{definition}
\newcommand{\beq}{\begin{equation}}
\newcommand{\eeq}{\end{equation}}
\newcommand{\beqv}{\begin{equation*}}
\newcommand{\eeqv}{\end{equation*}}
\DeclareMathOperator{\hull}{Conv}
\newcommand{\R}{\mathbb{R}}
\newcommand{\F}{\mathbb{F}}
\newcommand{\longto}{\longrightarrow}
\newcommand{\abs}[1]{\lvert #1\rvert}
\newcommand{\norme}[1]{\lVert #1\rVert}
\newcommand{\scal}[2]{\langle#1,#2\rangle}
\let\subset\subseteq
\begin{document}

\title{On metric convexity, the discrete Hahn-Banach theorem, separating systems and sets of points forming only acute angles}

\author{Hugues Randriambololona\footnote{T\'el\'ecom ParisTech, 46 rue Barrault, 75013 Paris, France --- \texttt{randriam@enst.fr}}{ }\footnote{While writing this work the author was supported by ANR-14-CE25-0015 project \textsc{Gardio} and ANR-15-CE39-0013 project \textsc{Manta}.}}
\date{November 30, 2016}

\maketitle

\begin{flushright}
\emph{\`A G\'erard pour ses $2^{6}\!$ et $2^{6}\!+\!1$-\`emes anniversaires.}
\end{flushright}

\begin{abstract}
This text has three parts.

The first one is largely autobiographical, hence my use of the first person. There I recall how G\'erard Cohen influenced important parts of my research.

The second is of a more classic mathematical nature. I present a discrete analogue of the Hahn-Banach theorem, which serves as a basis for generalizing the notion of separating systems in the context of metric convexity.

The third one aims at building a bridge between two communities of researchers, those interested in separating systems, and those interested in a certain question in combinatorial geometry --- sets of points forming only acute angles --- who seem not to be aware of each other, while \emph{they are working on precisely the same problem}!

Of course, these three themes are closely intertwined.
\end{abstract}

\section{A young arithmetic geometer arrives at T\'el\'ecom}

I first met G\'erard by start of 2000 (or end of 1999?), while I was applying for a teaching assistant position at ENST, in the team of Mathematics for Computer Science and Networks.
G\'erard was head of the team, and it was Gilles Z\'emor, whose cryptography course I attended, who introduced us.
I still remember how warm and welcoming both of them were.
At this time I was in the middle of my Ph.D. thesis focused on Arakelov geometry and transcendental number theory.
I had only a very limited background on the team's main topics --- combinatorics and information theory --- so all I could provide was my willingness to work as best as I could with my future colleagues.
G\'erard seemed happy with that and he supported my application, which ended up in success.
For this, I already owed him a lot. But it was only the beginning.

The first years I shared office with Gilles.
While gradually getting involved in the teaching and research of the team, I was accorded a lot of autonomy and free time in order to complete my thesis.
G\'erard is a very witty person, always entertaining us with his jokes.
He especially liked to make fun of me at this time (and in fact, he still does so 15 years later!), because one part of my thesis was on heights of flat, generically zero dimensional, closed subschemes of projective spaces \cite{CRAS}.
How, he said, could a \emph{flat} thing have some \emph{height}?
Especially if it is not really a ``thing'', but only a sub-''thing'', and moreover, of dimension zero!

G\'erard quickly realized that the domain on which I could best interact with the team was that of algebraic geometry codes.
He suggested I read a few papers on the subject. Those on decoding were not my cup of tea, at least for a first contact.
But in Wei's theory of higher weights \cite{Wei1991}, and Tsfasman and Vladut's geometric view on them \cite{TV1995}, I could recognize some beauty.
At some point he also proposed me to look more closely at a new paper of Xing on frameproof codes \cite{Xing2002}.

Frameproof codes, also known as linear intersecting codes, were already studied by G\'erard quite some time ago \cite{CL1985}.
They are closely related to $(2,1)$-separating systems, whose definition I'll recall later in this text.
A simple probabilistic argument shows that the maximal cardinality $\kappa(n)$ of a $(2,1)$-separating system in the binary Hamming space $(\F_2)^n$ admits the lower bound
\beq
\label{kappa}
\kappa(n)\geq\left\lfloor\frac{1}{2}\left(\frac{2}{\sqrt{3}}\right)^n\right\rfloor
\eeq
which corresponds to an asymptotic binary rate of
\beq
\label{probbound}
\limsup_{n\to\infty}\frac{1}{n}\log_2\kappa(n)\;\geq\;1-\frac{1}{2}\log_23\approx0.207518.
\eeq
This probabilistic argument is non-constructive, but together with H.~G.~Schaathun, G\'erard observed that quite good constructions could be obtained by a concatenation procedure \cite{CS2003}.
The best candidates for this construction were: as fixed inner code, a certain variant of the Nordstrom-Robinson code, and as outer codes, an asymptotic family of linear intersecting codes over $\F_{121}$.
Using AG codes on the Tsfasman-Vladut-Zink bound, and a simple criterion for the intersection property based on the minimum distance, they were able to achieve a rate of $0.184503$.

But then, Xing's paper proposed a more refined criterion for the intersection property, specific to AG codes.  
Thus G\'erard asked me if I could help understand better what this was about,
and to start with, give a talk on the subject during a mini-workshop he was organizing at ENST.
At some point in his construction, Xing used the basic upper bound $2^{2g}$ on the number of $2$-torsion rational points of the Jacobian of the genus $g$ curve from which the code comes.
During my talk \cite{NTC}, someone (if I remember correctly, Fran\c{c}ois Morain) observed that this was the worst case indeed, but for one given curve, very often, one could hope for much better.

I immediately investigated the implications of this remark, which were tremendous.
Directly using Xing's result in the concatenation procedure gives separating systems of asymptotic rate $0.200877$.
However, if one could make the $2$-torsion group of the Jacobian negligible, this would improve the rate to
\beq
\label{conjbound}
\frac{3}{50}\log_211\approx0.207565
\eeq
better than the probabilistic bound!

When I presented this to G\'erard, he was very excited and encouraging,
although at the same time he couldn't help but make fun of me getting this ridiculously small improvement from \eqref{probbound} to \eqref{conjbound}, at the fifth digit only
(and he still does so, 13 years later, each time we mention the subject).
Anyway, this led me to state and start studying the following:
\begin{problem}
\label{2torsion}
For $q$ a prime square (e.g. $q=121$), construct a family of curves over $\F_q$, with an asymptotically optimal number of rational points,
and a negligible number of $2$-torsion rational points on their Jacobians. 
\end{problem}

Two approaches were possible, both quite difficult.
First, one could consider towers of asymptotically optimal curves given by explicit equations;
the task was then to control the $2$-torsion part of the class groups in the successive extensions.
Or, one could consider modular curves, which are known to be asymptotically optimal; the order of the corresponding modular Jacobians (hence a control on their $2$-torsion) would then be given by the determinant of a Hecke operator.
I found this second approach especially appealing.
Ideally, one could hope to get some Hecke operators with odd determinant, so there would be no $2$-torsion at all!
A natural strategy was to study the asymptotic repartition of eigenvalues of Hecke operators for the $2$-adic topology, precisely what Serre had done previously but for the ordinary topology \cite{Serre1997}.
With luck, this would only be an easy translation exercise.
And G\'erard was delighted that such elaborate number theory could be related to a fifth digit improvement on one of his favourite problems!

I spent quite some time and energy on this question, first alone, playing with modular symbols and with the Eichler-Selberg trace formula, but remained totally unsuccessfull.
As years passed, it became more a background question for me than a really active research topic.
Occasionally I discussed the subject with a few colleagues, among which my former thesis advisor, Bost, in Orsay, who suggested: why not ask Serre directly?
It took me more than a year, in 2006-2007, to prepare a letter on my computer, with carefully chosen words... which I finally never dared to send.

I remained stalled a couple more years, and could have remained so even much longer.
But then, again, it was G\'erard who gave the impetus to put things back in motion.
In 2009 he was invited in a workshop Xing was organizing in Zhangjiajie, China, and among other things he wanted to mention my funny fifth digit conjectural improvement in his talk.
Of course, he asked my permission for that. First I was a little bit reluctant.
I like to keep ideas for myself and make things public only when I have a clean, finished result.
However, G\'erard managed to convince me that sharing ideas and being more open is a good practice... especially there, considering I was not making any progress six years after.

In his presentation \cite{Cohen2009} he only makes a very brief mention.
He encouraged me to publish all the details by myself, including heuristics and partial results supporting the underlying conjectures.
I did so the year after \cite{ITW2010}.
Writing everything down had a rather unexpected, happy consequence.
It made me think again about Xing's result with a new eye, and doing so, suddenly I found a direct way to improve it, totally circumventing Problem~\ref{2torsion}.
The method was very natural, based on a geometric argument related to the theory of Weierstrass points --- details can be found in~\cite{IJM}.
The conjectural lower bound~\eqref{conjbound} is now a theorem!

As an epilogue to this story, a few months later I attended a talk given in Jussieu by Ronald Cramer.
His aim was to present a new notion he called ``torsion limit''.
I was very surprised to discover that this new notion was essentially the same that I had introduced in \cite{ITW2010}, in order to measure how close to a solution of Problem~\ref{2torsion} one could get!
There was an explanation: actually, Cramer was working with Xing.
Together with their student Cascudo, they were interested in certain problems to which the ideas of \cite{Xing2002} applied.
From this, it was very natural that they also considered Problem~\ref{2torsion}, independently.
Beside frameproof codes, the applications they had in mind were symmetric bilinear multiplication algorithms, and arithmetic secret sharing schemes.
I was happy to figure out that my new method, that allowed to bypass Problem~\ref{2torsion}, could also be used in the context of symmetric bilinear multiplication algorithms \cite{2D-G}\cite{ChCh}.
On the other hand, it does not seem to apply to arithmetic secret sharing systems, so a solution to Problem~\ref{2torsion} remains needed there.
Cascudo, Cramer and Xing have obtained important results in this direction \cite{CCX2011}.

\section{Convexity and separation}
\label{ConvSep}

Let $(E,d)$ be a metric space.
For $x,y\in E$ the segment $[x,y]$ is defined as the set of points $z\in E$
satisfying $d(x,y)=d(x,z)+d(z,y)$.
A subset $K$ of $E$ is said convex if, whenever $x,y\in K$,
then $[x,y]\subset K$.
Given a finite subset $S\subset E$, we define
its convex hull $\hull(S)$, as the smallest convex set
that contains $S$.

\begin{definition}
Given two integers $s,t$, a subset $C\subset E$ is said \emph{$(s,t)$-separating}
if for any subsets $S,T\subset C$ with $\abs{S}\leq s$, $\abs{T}\leq t$, and
$S\cap T=\emptyset$, one has also $\hull(S)\cap\hull(T)=\emptyset$.
\end{definition}

This allows to consider $(s,t)$-separation in any metric space
(for example, in graphs).
However for some spaces these notions are quite poorly behaved.
For instance
it may happen that segments are not convex: 

\medskip

\begin{center}
\begin{tikzpicture}
\draw (0,1.25) -- (2.5,0) -- (0,-1.25) -- (-2.5,0) -- (0,1.25) -- (0,-1.25);
\filldraw (0,0) circle (0.1);
\filldraw (-2.5,0) circle (0.1);
\filldraw (2.5,0) circle (0.1);
\filldraw (0,1.25) circle (0.1);
\filldraw (0,-1.25) circle (0.1);
\draw (-2.75,0.375) node {\large $x$};
\draw (2.75,0.375) node {\large $y$};
\end{tikzpicture}

\medskip

the bipartite graph $K_{3,2}$
\end{center}

\noindent Here, $[x,y]\varsubsetneq\hull(x,y)$.

\bigskip

On the other hand, there are spaces
in which these notions have very nice properties.
We first describe qualitatively what we expect these desirable properties
to be:

\begin{enumerate}[(P1)]
\item While in general $\hull(S)$ can always be described ``externally''
as the intersection of \emph{all} convex sets containing $S$, this may
not be very manageable. One would like that this intersection could
be taken over a smaller class of sets.
\item While $\hull(S)$ can always be constructed ``internally'',
starting with $S$, and saturating it under the operation
that, to a set $S'$, adjoins all the segments $[x,y]$ for $x,y\in S'$,
in general the number of iterations in this procedure could
not be bounded a priori. One would like to have such a bound (for example,
linear, or better, logarithmic in $\abs{S}$).
\item Last, one would like to have a direct or ``synthetic''
characterization of the individual elements of $\hull(S)$ in relation
to the elements of $S$.
\end{enumerate}

These three points are best illustrated in the following well-known example.

Let $(E,\scal{.}{.})$ be a Euclidean space.
A half-space in $E$ is (uniquely) defined as a subset of the form
\beqv
H_{u,\alpha}=\;\{x\in E \;|\;\scal{u}{x}\leq\alpha\}\;=\;l_u^{-1}(]-\infty,\alpha])
\eeqv
where $u$ is a unit vector in $E$, with associated linear form $l_u$, and $\alpha$ is a real.
 
Then, given a finite number of points $x_1,\dots,x_m\in E$, 
their convex hull $\hull(x_1,\dots,x_m)$ admits the following equivalent
descriptions:
\begin{enumerate}[(P1)]
\item 
$\hull(x_1,\dots,x_m)$ is the intersection of the
\emph{half-spaces} that contain
$x_1,\dots,x_m$.
\item $\hull(x_1)=\{x_1\}$,
and for $m\geq2$:
\beqv
\hull(x_1,\dots,x_m)=\bigcup_{x\in \hull(x_1,\dots,x_{m-1})}[x,x_m].
\eeqv
\item More directly: 
\beqv
\hull(x_1,\dots,x_m)=\left\{\lambda_1x_1+\cdots+\lambda_mx_m\;|\;\lambda_j\geq0,\sum\lambda_j=1\right\}.
\eeqv
\end{enumerate}

We consider now another example,
which is a perfect analogue of the preceding,
in a discrete setting.

Let $Q$ be a set of cardinality $\abs{Q}=q$
and let $E=Q^n$ be the
set of length~$n$ sequences over the alphabet $Q$. Equip $E$ with the
Hamming distance $d$.

Define a ``half-space'' in $E$ to be a subset of the form
\beqv
H_{i,\alpha}=\;\{x\in E \;|\; \pi_i(x)\neq\alpha\}\;=\;\pi_i^{-1}(Q\setminus\{\alpha\})
\eeqv
where $\pi_i:E\longto Q$ is projection
on the $i$-th coordinate ($1\leq i\leq n$), and
$\alpha\in Q$. Such a subset has cardinality
$\abs{H_{i,\alpha}}=(q-1)q^{n-1}=\frac{q-1}{q}\abs{E}$.
(In particular for $q=2$ a ``half-space'' really is a half-space.)

\begin{theorem}
\label{thHB}
Let $E=Q^n$, equipped with the Hamming distance.
Then, given a finite number of points $x_1,\dots,x_m\in E$, 
their convex hull $\hull(x_1,\dots,x_m)$ admits the following equivalent
descriptions:
\begin{enumerate}[(P1)]
\item 
$\hull(x_1,\dots,x_m)$ is the intersection of the ``half-spaces''
that contain
$x_1,\dots,x_m$.
\item  $\hull(x_1)=\{x_1\}$,
and for $m\geq2$:
\beqv
\hull(x_1,\dots,x_m)=\bigcup_{x\in \hull(x_1,\dots,x_{m-1})}[x,x_m].
\eeqv
\item More directly:
\beqv
\hull(x_1,\dots,x_m)=\left\{\:x\in E\;|\;\forall i\;\; \pi_i(x)\in\{\pi_i(x_1),\dots,\pi_i(x_m)\}\:\right\}.
\eeqv
\end{enumerate} 
\end{theorem}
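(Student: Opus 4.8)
The plan is to reduce the whole statement to one explicit description of segments in the Hamming metric, and then read off (P1), (P2), (P3) from it: (P3) and (P1) will come essentially for free, while (P2) will be extracted from the proof of the one non-trivial inclusion in (P3).

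\emph{Step 1: describing segments.} First I would prove that for $x,y\in E$,
\[
[x,y]\;=\;\{z\in E \;|\; \pi_i(z)\in\{\pi_i(x),\pi_i(y)\}\ \text{for every }i\}.
\]
Since $d(a,b)=\abs{\{i : \pi_i(a)\neq\pi_i(b)\}}$ decomposes as a sum over coordinates, one has $d(x,z)+d(z,y)-d(x,y)=\sum_{i=1}^n s_i$, where $s_i=[\pi_i(x)\neq\pi_i(z)]+[\pi_i(z)\neq\pi_i(y)]-[\pi_i(x)\neq\pi_i(y)]$ (Iverson brackets). Each $s_i\geq0$, so the left-hand side vanishes iff every $s_i$ vanishes; and distinguishing the cases $\pi_i(x)=\pi_i(y)$ and $\pi_i(x)\neq\pi_i(y)$ shows $s_i=0$ iff $\pi_i(z)\in\{\pi_i(x),\pi_i(y)\}$. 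This gives the claim.

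\emph{Step 2: (P3), (P1), and the easy inclusion.} Let $C$ denote the set in (P3). By Step~1 it is convex (if $x,y\in C$ and $z\in[x,y]$ then $\pi_i(z)\in\{\pi_i(x),\pi_i(y)\}\subset\{\pi_i(x_1),\dots,\pi_i(x_m)\}$ for all $i$), and it visibly contains each $x_j$; hence $\hull(x_1,\dots,x_m)\subset C$. For (P1): a ``half-space'' $H_{i,\alpha}$ contains all the $x_j$ precisely when $\alpha\notin\{\pi_i(x_1),\dots,\pi_i(x_m)\}$, and intersecting all such $H_{i,\alpha}$ imposes exactly ``$\pi_i(x)\in\{\pi_i(x_1),\dots,\pi_i(x_m)\}$ for every $i$'', i.e.\ it yields $C$; since moreover each $H_{i,\alpha}$ is itself convex by Step~1, this intersection also contains $\hull(x_1,\dots,x_m)$. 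So (P1) and (P3) both come down to the single remaining inclusion $C\subset\hull(x_1,\dots,x_m)$.

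\emph{Step 3: the reverse inclusion and (P2).} I would prove $C(x_1,\dots,x_m)\subset\hull(x_1,\dots,x_m)$ by induction on $m$, the case $m=1$ being trivial. Given $x\in C(x_1,\dots,x_m)$, define $x'\in E$ coordinatewise by $\pi_i(x')=\pi_i(x)$ when $\pi_i(x)\in\{\pi_i(x_1),\dots,\pi_i(x_{m-1})\}$ and $\pi_i(x')=\pi_i(x_1)$ otherwise. Then $x'\in C(x_1,\dots,x_{m-1})$, so by induction $x'\in\hull(x_1,\dots,x_{m-1})\subset\hull(x_1,\dots,x_m)$; and for every $i$ one has $\pi_i(x)\in\{\pi_i(x'),\pi_i(x_m)\}$ (it equals $\pi_i(x')$ in the first case, and in the second case membership $x\in C(x_1,\dots,x_m)$ forces $\pi_i(x)=\pi_i(x_m)$). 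By Step~1 this means $x\in[x',x_m]$, which is contained in $\hull(x_1,\dots,x_m)$ by convexity, completing the induction. The same computation shows $C(x_1,\dots,x_m)\subset\bigcup_{x'\in\hull(x_1,\dots,x_{m-1})}[x',x_m]$; the reverse inclusion of this union is immediate from convexity of $\hull(x_1,\dots,x_m)$, which proves (P2).

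\emph{Main obstacle.} There is no deep difficulty; the only step requiring an actual idea rather than unwinding of definitions is the construction of the intermediate point $x'$ in Step~3. It is the discrete analogue of peeling one point off a convex combination in the Euclidean picture, and it is where the Hahn--Banach content of the theorem is concentrated; everything else follows from the coordinatewise triangle inequality.
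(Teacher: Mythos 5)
Your proposal is correct and follows essentially the same route as the paper: the same coordinatewise description of segments as the key lemma, the same proof that the (P3) set is convex (giving one inclusion and (P1)), and the same coordinate-by-coordinate construction of the intermediate point to peel off $x_m$ in the induction that yields the reverse inclusion and (P2). The only difference is cosmetic bookkeeping in how the chain of inclusions between the alternative hulls is organized.
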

\begin{proof}
We start with the following \emph{key observation}: for any $x,y\in E$,
\beq
\label{key}
[x,y]=\{\:z\in E\;|\;\forall i\;\; \pi_i(z)\in\{\pi_i(x),\pi_i(y)\}\:\}.
\eeq
This is shown by computing the contribution of each 
coordinate in the equality case $d(x,y)=d(x,z)+d(z,y)$
of the triangular inequality.

Temporarily define alternative ``convex hulls'' 
$\widetilde{\hull}_1$ $\widetilde{\hull}_2$ $\widetilde{\hull}_3$ as
given by descriptions (P1) (P2) (P3) respectively
(while ``$\hull$'' still denotes the original one).

First we prove
\beqv
\widetilde{\hull}_2(x_1,\dots,x_m)\subset\hull(x_1,\dots,x_m)
\eeqv
by induction on $m$. It clearly holds for $m=1$, so let $m\geq 2$ and suppose
it holds for $m-1$:
$\widetilde{\hull}_2(x_1,\dots,x_{m-1})\subset\hull(x_1,\dots,x_{m-1})$.
Then $\hull(x_1,\dots,x_m)$ being convex contains all the segments
$[x,x_m]$ for $x\in\widetilde{\hull}_2(x_1,\dots,x_{m-1})$,
hence it contains $\widetilde{\hull}_2(x_1,\dots,x_m)$ as claimed.

Now, in order to show
\beqv
\hull(x_1,\dots,x_m)\subset\widetilde{\hull}_3(x_1,\dots,x_m)
\eeqv
it suffices to show that $\widetilde{\hull}_3(x_1,\dots,x_m)$ is convex.
This will follow
from our \emph{key observation} \eqref{key}:
if $x,y\in\widetilde{\hull}_3(x_1,\dots,x_m)$ and $z\in[x,y]$,
then for any $i$, one has
$\pi_i(z)\in\{\pi_i(x),\pi_i(y)\}\subset\{\pi_i(x_1),\dots,\pi_i(x_m)\}$
hence $z\in\widetilde{\hull}_3(x_1,\dots,x_m)$, which is what
we wanted.

Now we prove
\beqv
\widetilde{\hull}_3(x_1,\dots,x_m)\subset\widetilde{\hull}_2(x_1,\dots,x_m)
\eeqv
by induction on $m$. It clearly holds for $m=1$, so let $m\geq 2$ and suppose
it holds for $m-1$.
Pick any $z\in\widetilde{\hull}_3(x_1,\dots,x_m)$
and define some $x\in\widetilde{\hull}_3(x_1,\dots,x_{m-1})$
coordinate by coordinate as follows:
\beqv
\pi_i(x)=
\begin{cases}
\pi_i(z) & \text{if $\pi_i(z)\in\{\pi_i(x_1),\dots,\pi_i(x_{m-1})\}$}\\
\pi_i(x_1) & \text{if $\pi_i(z)=\pi_i(x_m)\not\in\{\pi_i(x_1),\dots,\pi_i(x_{m-1})\}$.}
\end{cases}
\eeqv
Then for all $i$, $\pi_i(z)\in\{\pi_i(x),\pi_i(x_m)\}$,
hence $z\in[x,x_m]$,
so $z\in\widetilde{\hull}_2(x_1,\dots,x_m)$ and the conclusion follows.

Thus we have $\hull=\widetilde{\hull}_2=\widetilde{\hull}_3$, and
to conclude, write for any $S\subset E$:
\beqv
\begin{split}
\widetilde{\hull}_1(S)&=\bigcap_{1\leq i\leq n}\bigcap_{\;\alpha\not\in\pi_i(S)}\pi_i^{-1}(Q\setminus\{\alpha\})\\
&=\bigcap_{1\leq i\leq n}\pi_i^{-1}(\pi_i(S))\;=\;\widetilde{\hull}_3(S)
\end{split}
\eeqv
as wished.\end{proof}

Drawing the parallel with the Euclidean case,
it may be convenient to see (P1) as a \emph{discrete Hahn-Banach theorem}:
given any (Hamming-)convex set $K\subset Q^n$
and any $x\not\in K$, there is a coordinate $i$ that separates $x$
from $K$, in the sense that $\pi_i(x)\not\in\pi_i(K)$.

Both for Euclidean space and for Hamming space,
one can actually prove the following stronger version of (P2):
if $S$ is a finite set, written as a union $S=T\cup T'$,
then
\beq
\hull(S)=\bigcup_{x\in \hull(T),x'\in\hull(T')}[x,x'].
\eeq
This follows easily from (P3).
As a consequence, if one defines $K_0=S$ and inductively
$K_{i+1}=\bigcup_{x,x'\in K_i}[x,x']$,
then
$\hull(S)=K_{\lceil\log_2\abs{S}\rceil}$.

Observe that when $m=2$, description (P2) asserts that \emph{segments are
convex}.
In particular, a code $C\subset Q^n$
is $(2,1)$-separating when for any $x,y,z\in C$
with $z\neq x,y$, one has $z\not\in[x,y]$.
I first read this elegant definition in \cite{Korner1995},
and understanding it in a more general context was one of the motivations for this work.

\section{From separating systems to sets of points forming only acute angles, and vice-versa}

Separating systems (in Hamming spaces) have a very long history, and were rediscovered independently several times in various contexts.
When G\'erard introduced me to the subject, he was interested in applications to broadcast diffusion and traitor tracing.
In this context, given $m$ sequences $x_1,\dots,x_m$ in $E=Q^n$,
their convex hull as described by (P3) in Theorem~\ref{thHB} is often called the set of \emph{descendants} of $x_1,\dots,x_m$,
or the set of sequences that can be \emph{framed} by $x_1,\dots,x_m$.

Thanks to the Hahn-Banach property (P1), we see that a code $C\subset Q^n$ is $(s,t)$-separating
precisely when for any subsets $S,T\subset C$ with $\abs{S}\leq s$, $\abs{T}\leq t$, and
$S\cap T=\emptyset$, there is a coordinate $i$ such that
$\pi_i(S)\cap\pi_i(T)=\emptyset$.
In this situation we
say that $i$ is a \emph{separating coordinate} for $S$ and $T$,
or equivalently, that the codewords in $S$ and $T$ are \emph{separated}
at $i$.

Given a problem, G\'erard often enjoys devising modified versions.
Here, a natural generalization is the following:
%
\begin{definition}
Given integers $s,t$ and a real $\epsilon\geq0$, we say that a code
$C\subset Q^n$ is $\epsilon$-$(s,t)$-separating, if any disjoint $S,T\subset C$ with
$\abs{S}\leq s,\abs{T}\leq t$ admit a set $\Lambda\subset\{1,\dots,n\}$ of separating coordinates of cardinality $\abs{\Lambda}>\epsilon n$.
\end{definition}

The analogy between the Hamming and Euclidean Hahn-Banach theorems then suggests a very natural variant.
Say that two subsets $S,T$ of a Euclidean space $E$ are \emph{separated} by a vector $u$ in the unit sphere $S_E(1)$
if there is a real $\alpha$ such that one of $S$ or $T$ is included in the half-space $H_{u,\alpha}=\{x\in E \;|\;\scal{u}{x}\leq\alpha\}$, and the other is included in its complement. Then:
%
\begin{definition}
Given integers $s,t$ and
a real $\epsilon\geq0$, we say that a subset $C\subset E$ is
$\epsilon$-$(s,t)$-separating, if any disjoint $S,T\subset C$ with
$\abs{S}\leq s,\abs{T}\leq t$ admit a set $\Lambda\subset S_E(1)$ of separating vectors of measure $\mu(\Lambda)>\epsilon\mu(S_E(1))$.
\end{definition}

This definition might seem complicated, but it turns out it has a very nice geometric interpretation, best illustrated in the simplest case $(s,t)=(2,1)$:

\begin{proposition}
\label{angle}
A subset $C$ of a Euclidean space $E$ is $\epsilon$-$(2,1)$-separating
if and only if any three distinct $x,y,z$ in $C$ form an angle of measure $\widehat{xzy}<(1-\epsilon)\pi$.
\end{proposition}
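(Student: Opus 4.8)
The plan is to unwind the two definitions and reduce everything to a statement about a single pair $\{x,y\}$ and $\{z\}$. Fix three distinct points $x,y,z\in C$. Taking $S=\{x,y\}$ and $T=\{z\}$, the code $C$ is $\epsilon$-$(2,1)$-separating precisely when, for every such triple, the set $\Lambda_{x,y,z}\subset S_E(1)$ of unit vectors $u$ that separate $\{x,y\}$ from $\{z\}$ has measure strictly greater than $\epsilon\,\mu(S_E(1))$. So the whole proposition reduces to computing the normalized measure of $\Lambda_{x,y,z}$ and showing it equals $1-\widehat{xzy}/\pi$, or at least that it exceeds $\epsilon$ iff $\widehat{xzy}<(1-\epsilon)\pi$.

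Next I would identify $\Lambda_{x,y,z}$ concretely. A unit vector $u$ separates $\{x,y\}$ from $\{z\}$ iff there is $\alpha$ with $\langle u,x\rangle\le\alpha$, $\langle u,y\rangle\le\alpha$, and $\langle u,z\rangle>\alpha$ (or the same with the roles of the two sides swapped, giving the complement-side case). The existence of such an $\alpha$ is equivalent to $\langle u,z\rangle>\max(\langle u,x\rangle,\langle u,y\rangle)$, i.e. $\langle u,z-x\rangle>0$ \emph{and} $\langle u,z-y\rangle>0$; the swapped case gives $\langle u,z-x\rangle<0$ and $\langle u,z-y\rangle<0$. Writing $a=z-x$ and $b=z-y$, we get
\beqv
\Lambda_{x,y,z}=\{u\in S_E(1)\;|\;\langle u,a\rangle>0,\ \langle u,b\rangle>0\}\ \cup\ \{u\in S_E(1)\;|\;\langle u,a\rangle<0,\ \langle u,b\rangle<0\},
\eeqv
a union of two antipodal open "lune"-type regions (the set where $u$ has positive inner product with both $a$ and $b$, together with its antipode). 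Since $x,y,z$ are distinct, $a$ and $b$ are nonzero; if $a$ and $b$ are parallel one treats that degenerate case separately (the angle is $0$ or $\pi$ and the measure is $1$ or $0$ accordingly).

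The measure computation is the geometric heart. By rotational invariance of the spherical measure, $\mu(\Lambda_{x,y,z})/\mu(S_E(1))$ depends only on the angle $\theta$ between $a=z-x$ and $b=z-y$ — and $\theta$ is exactly the angle $\widehat{xzy}$. I would reduce to the plane spanned by $a,b$: for a unit vector $u$, the sign pattern of $(\langle u,a\rangle,\langle u,b\rangle)$ depends only on the orthogonal projection of $u$ onto that plane, so by Fubini/disintegration the normalized measure on $S_E(1)$ of the condition equals the normalized arclength on the unit circle $S^1$ of the corresponding planar condition. On $S^1$, the set $\{\langle u,a\rangle>0\}$ is an open semicircle and likewise for $b$; two semicircles whose "centers" make angle $\theta$ intersect in an arc of length $\pi-\theta$, and adding the antipodal arc gives total arclength $2(\pi-\theta)$ out of $2\pi$. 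Hence $\mu(\Lambda_{x,y,z})/\mu(S_E(1))=1-\theta/\pi=1-\widehat{xzy}/\pi$. Therefore $\mu(\Lambda_{x,y,z})>\epsilon\,\mu(S_E(1))$ holds for all distinct triples iff $\widehat{xzy}<(1-\epsilon)\pi$ for all of them, which is the claim.

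The main obstacle is the Fubini/disintegration step that justifies passing from the sphere $S_E(1)$ in arbitrary dimension to the unit circle in the plane $\mathrm{span}(a,b)$ — one must argue carefully that the uniform measure on $S_E(1)$ pushes forward, under $u\mapsto u/\|u\|$ composed with orthogonal projection onto that plane (defined off a measure-zero set), to the uniform measure on $S^1$. This is standard (it is the statement that a uniform random point on a sphere has a uniformly random "longitude" in any fixed $2$-plane) but it is the only genuinely nontrivial point; everything else is bookkeeping about inner-product signs and the elementary fact that two semicircles at angular distance $\theta$ overlap in an arc of length $\pi-\theta$. I would also remark in passing that the case $\epsilon=0$ recovers the classical characterization of $(2,1)$-separating sets in Euclidean space as sets of points forming only acute (well, non-obtuse, then strictly: $<\pi$, i.e. no three collinear with the middle one between) angles, linking back to the motivation in the previous section.
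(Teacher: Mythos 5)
Your proof is correct and follows essentially the same route as the paper's: both reduce the separation condition for a unit vector $u$ to a condition on its projection onto the plane of the triangle $xyz$, and then compute the relative measure of the (non-)separating directions as the angular fraction $\widehat{xzy}/\pi$. You spell out the inner-product sign conditions and the disintegration step that the paper leaves implicit, but the underlying argument is the same.
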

\begin{proof}
A unit vector $u$ separates $z$ from the segment $[x,y]$ precisely when the affine hyperplane $z+u^\perp$ does not intersect $[x,y]$.
This condition depends only on the projection $p(u)$ of $u$ on the plane $(xyz)$.
Since $p(u)=0$ defines a set of measure $0$, we can assume $p(u)\neq0$.
Then we see that $u$ does \emph{not} separate $z$ from $[x,y]$ precisely when the line orthogonal to $p(u)$ in the plane lies in the cone from $z$ to $[x,y]$,
which defines a set of directions of relative measure $\frac{1}{\pi}\widehat{xzy}$.
\end{proof}

Actually, the problem of constructing a large $\epsilon$-$(2,1)$-separating system in the standard $n$-dimensional Euclidean space $E=\R^n$,
that is, constructing a large system of points subject to an upper constraint on the greatest angle between them,
has already been considered by Erd\"os and F\"uredi in \cite{ErdosFuredi}.

Of special importance is the case $\epsilon=\frac{1}{2}$: from Proposition~\ref{angle} we see that
a subset $C\subset \R^n$ is $\frac{1}{2}$-$(2,1)$-separating precisely when any three of its elements form an acute angle.
We let $\alpha(n)$ be the maximal possible cardinality of such a $C$.

So far, we rediscovered the Erd\"os-F\"uredi problem as a translation of the theory of separating systems from Hamming space to Euclidean space.
It turns out Erd\"os and F\"uredi went precisely in the opposite direction!
For this they focused on the special case of sets of points that are vertices of the unit cube:
\begin{lemma}[\cite{ErdosFuredi}]
Consider a subset $C\subset\{0,1\}^n\subset\R^n$ of vertices of the unit cube in the standard $n$-dimensional Euclidean space.
Then $C$ is Euclidean $\frac{1}{2}$-$(2,1)$-separating if and only if $C$ is Hamming $(2,1)$-separating.
\end{lemma}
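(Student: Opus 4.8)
The plan is to reduce each of the two properties to a condition on the coordinatewise data of triples in $C$, and then to observe that for vertices of the cube these two conditions literally coincide.

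First, on the Euclidean side, since $\epsilon=\frac12$ we have $(1-\epsilon)\pi=\frac\pi2$, so Proposition~\ref{angle} says that $C\subset\{0,1\}^n$ is Euclidean $\frac12$-$(2,1)$-separating if and only if every three distinct points $x,y,z\in C$ satisfy $\widehat{xzy}<\frac\pi2$; equivalently, the triangle spanned by any three points of $C$ has all of its angles strictly acute. On the Hamming side, by the characterization of $(2,1)$-separation recalled after Theorem~\ref{thHB}, $C$ is Hamming $(2,1)$-separating if and only if there is no triple $x,y,z\in C$ with $z\neq x,y$ and $z\in[x,y]$, which by the key observation \eqref{key} means $\pi_i(z)\in\{\pi_i(x),\pi_i(y)\}$ for every $i$.

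The link between the two is a one-line computation of the Euclidean inner product of the vectors $x-z$ and $y-z$ when $x,y,z$ are $0/1$-vectors. Writing $x_i$ for $\pi_i(x)$, the term $(x_i-z_i)(y_i-z_i)$ of $\scal{x-z}{y-z}=\sum_i(x_i-z_i)(y_i-z_i)$ vanishes whenever $z_i=x_i$ or $z_i=y_i$, while if $z_i\notin\{x_i,y_i\}$ then necessarily $x_i=y_i$ (the other element of $\{0,1\}$) and the term equals $1$. Hence
\beqv
\scal{x-z}{y-z}=\abs{\{\,i : x_i=y_i\neq z_i\,\}}\;\geq\;0,
\eeqv
which shows that the angle $\widehat{xzy}$ is never obtuse, and that it is a right angle precisely when the set on the right is empty, i.e.\ precisely when $\pi_i(z)\in\{\pi_i(x),\pi_i(y)\}$ for all $i$, i.e.\ precisely when $z\in[x,y]$ by \eqref{key}.

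Putting these together, for distinct $x,y,z\in\{0,1\}^n$ one has $\widehat{xzy}<\frac\pi2$ if and only if $z\notin[x,y]$ in the Hamming metric. Quantifying over all ordered triples of distinct points of $C$ --- which on the Euclidean side sweeps out all three angles of each triangle, and on the Hamming side all three choices of the singleton --- we conclude that $C$ is Euclidean $\frac12$-$(2,1)$-separating if and only if $C$ is Hamming $(2,1)$-separating. I do not expect a genuine obstacle here; the computation of $\scal{x-z}{y-z}$ is the crux, and the only points demanding a little care are the strictness of the inequality (the value $0$ is exactly the right-angle/Hamming-betweenness borderline that both definitions forbid) and the bookkeeping that both ``separating'' notions exclude precisely the degenerate triples with $z\in\{x,y\}$, so that quantifying over distinct triples on both sides is legitimate.
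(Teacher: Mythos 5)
Your proof is correct and rests on exactly the same computation as the paper's: the identity $\scal{x-z}{y-z}=\abs{\{i : x_i=y_i\neq z_i\}}$, i.e.\ that the scalar product counts the Hamming separating coordinates between $\{x,y\}$ and $\{z\}$. The paper states only this identity and leaves the rest implicit; you have simply written out the routine deductions (nonnegativity, the right-angle/betweenness borderline, the quantification over triples) in full.
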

\begin{proof}
For $x,y,z\in\{0,1\}^n$, the scalar product $\scal{x-z}{y-z}$ is equal to the number of Hamming separating coordinates between $\{x,y\}$ and $\{z\}$.
\end{proof}

As a consequence, we have
\beqv
\alpha(n)\geq\kappa(n)\geq\left\lfloor\frac{1}{2}\left(\frac{2}{\sqrt{3}}\right)^n\right\rfloor.
\eeqv

Actually, Erd\"os and F\"uredi did not use the term ``separating systems'' and did not seem aware of the literature on the subject.
And conversely, specialists in separating systems seem not to have noticed the paper \cite{ErdosFuredi}.
This led to the very unfortunate situation of the development of two disjoint series of works on the same topic:
\begin{itemize}
\item On the separating systems side, we can cite among other \cite{Renyi1961}\cite{FGU69}
\cite{Saga1978}\cite{FK84}\cite{Korner1995}\cite{CS2003}\cite{IJM} and the extensive survey \cite{SC2009}.
\item The Erd\"os-F\"uredi problem is considered in \cite{ErdosFuredi}\cite{Bevan2006}\cite{AB2009} and it is also mentioned in the classic books \cite{AlonSpencer}\cite{AignerZiegler}.
\end{itemize}

As an illustration of this phenomenon, we observe that K\"orner gives two reformulations of the notion of $(2,1)$-separation in \cite{Korner1995}.
We already mentioned the first one: a binary Hamming $(2,1)$-separating system is a set of binary sequences no three of which are on a line.
His second reformulation is in terms of set systems: a binary Hamming $(2,1)$-separating system is the same thing as a set system no three elements of which $A,B,C$ satisfy
\beqv
A\cap B\subset C\subset A\cup B.
\eeqv
Actually this formulation can already be found in \cite{ErdosFuredi}!
(Interestingly, although K\"orner does not refer to \cite{ErdosFuredi}, he cites and discusses the related notion of $2$-cover free families from \cite{EFF1982}.)

\vspace{\baselineskip}

What is the best lower bound on $\kappa(n)$ up to now?
Some improvements on \eqref{kappa} can be found in \cite{Bevan2006} and \cite{AB2009}, still relying on the probabilistic method. They read
\beqv
\kappa(n)\geq2\left\lfloor\frac{\sqrt{6}}{9}\left(\frac{2}{\sqrt{3}}\right)^n\right\rfloor
\eeqv
\beqv
\kappa(n)\geq\Omega\left(\left(\frac{2}{\sqrt{3}}\right)^n\sqrt{n}\right)
\eeqv
respectively, so they still have the same asymptotic exponent $1-\frac{1}{2}\log_23\approx0.207518$ as from \eqref{probbound}.

However, from \cite{IJM} we get
\beqv
\kappa(n)\geq 11^{\left(\frac{3}{50}-o(1)\right)n}
\eeqv
which is asymptotically better, with the exponent $\frac{3}{50}\log_211\approx0.207565$ of \eqref{conjbound}.
Moreover, this is obtained by an algebraic construction.

Actually, a bound $\kappa(n)\geq(\sqrt[4]{2}-o(1))^n$ is claimed in \cite{ErdosFuredi} without proof. If true, this would be even better.
Unfortunately, I was unable to reproduce it. I also asked F\"uredi if he was able to retrieve it, but got no answer yet.
The proof, if it ever existed, seems lost.

In the first months of 2016 G\'erard received an email from N.~Sloane, who was interested in collecting the first values of $\kappa(n)$ for the On-Line Encyclopedia of Integer Sequences.
He knew of the probabilistic results from \cite{Bevan2006} and \cite{AB2009}, and he was enquiring G\'erard about possible algebraic constructions from coding theory.
When G\'erard brought this as a coffee discussion with the team,
I explained how this was related to separating systems and proposed to write back to Sloane personally.
The story is now included in \cite{OEIS}.

\vspace{\baselineskip}

As a conclusion, I presented how the Hahn-Banach theorem allows to translate the notion of separating system from Hamming space to Euclidean space,
and how Erd\"os and F\"uredi went precisely the opposite way 35 years ago.
It could be desirable to start a more systematic study of separating systems in various metric spaces.
A first candidate for this is real $L^1$ space, that is, $\R^n$ equipped with the norm $\norme{x}_{L^1}=\abs{\pi_1(x)}+\cdots+\abs{\pi_n(x)}$.
It is easy to show that this metric space satisfies analogues of our (P1) (P2) (P3), the precise formulation of which we leave as an exercise to the reader.
In particular we see that there again, segments are convex;
actually the segment from $x$ to $y$ in this space is the \emph{standard box} $[x,y]=\{z\in\R^n\;|\;\forall i,\, \pi_i(x)\leq\pi_i(z)\leq\pi_i(y)\textrm{ or }\pi_i(y)\leq\pi_i(z)\leq\pi_i(x)\}$.
From this description we see that $(2,1)$-separation in real $L^1$ space is precisely the problem studied by Alon, F\"uredi and Katchalski in \cite{AFK1985}!
Interestingly, these authors use the term ``separation'', although they do not refer to the literature on separating systems.
Moreover they point out that their work borrows inspiration from \cite{ES1935}, which itself is loosely related to \cite{ErdosFuredi}.
It is remarkable that all these incarnations of separating systems in various metric spaces have such connections with combinatorial geometry.
Certainly this phenomenon would deserve further study.

\end{document}